\theoremstyle{plain}
\newtheorem{theorem}{Theorem}[section]
\newtheorem{proposition}[theorem]{Proposition}
\newtheorem{lemma}[theorem]{Lemma}
\newtheorem{definition}[theorem]{Definition}
\newtheorem{question}[theorem]{Question}
\theoremstyle{remark}
\newtheorem*{claim}{Claim}
\title [On the connectedness of subcomplexes of a disk complex]
{On the connectedness of subcomplexes of a disk complex}
\author[J. H. Lee]{Jung Hoon Lee}
\address{Department of Mathematics and Institute of Pure and Applied Mathematics,
Chonbuk National University, Jeonju 561-756, Korea}
\email{junghoon@jbnu.ac.kr}
\begin{document}

\begin{abstract}
For a boundary-reducible $3$-manifold $M$ with $\partial M$ a genus $g$ surface,
we show that if $M$ admits a genus $g+1$ Heegaard surface $S$,
then the disk complex of $S$ is simply connected.
Also we consider the connectedness of the complex of reducing spheres.
We investigate the intersection of two reducing spheres
for a genus three Heegaard splitting of $\mathrm{(torus)} \times I$.
\end{abstract}

\maketitle

\section{Introduction}\label{sec1}

One of the ways to understand a $3$-manifold is to study nicely embedded surfaces in the $3$-manifold.
An {\em incompressible surface} is a surface which has no compressing disks, and
it has played an important role in the $3$-manifold topology.
Let $S$ be a closed orientable surface embedded in a $3$-manifold and
suppose that $S$ compresses to both sides of $S$.
If every compressing disk for $S$ in one side intersects every compressing disk for $S$ in the other side,
then the surface is called a {\em strongly irreducible surface}.

Let $V$ denote one side of $S$ and $W$ denote the other side of $S$, and
let $\mathcal{D}_V$ and $\mathcal{D}_W$ denote the set of all compressing disks in $V$ and $W$ respectively.
The surface $S$ is called a {\em critical surface} if
the set of all compressing disks for $S$ can be partitioned
into two non-empty sets $C_0$ and $C_1$ satisfying the following conditions.

\begin{enumerate}
\item For each $i=0,1$, there is at least one pair of compressing disks
$D_i \in \mathcal{D}_V \cap C_i$ and $E_i \in \mathcal{D}_W \cap C_i$ such that $D_i \cap E_i = \emptyset$.
\item If $D \in \mathcal{D}_V \cap C_i$ and $E \in \mathcal{D}_W \cap C_{1-i}$, then $D \cap E \ne \emptyset$.
\end{enumerate}

Incompressible surfaces, strongly irreducible surfaces and critical surfaces share nice properties.
For example, if an irreducible $3$-manifold contains an incompressible surface and one of these surfaces,
then the two surfaces can be isotoped so that any intersection loop is essential on both surfaces.
Extending these notions,
Bachman defined topologically minimal surfaces in terms of the disk complex of a surface \cite{Bachman}.

A {\em disk complex} $\mathcal{D}(S)$ of a surface $S$ embedded in a $3$-manifold
is a simplicial complex defined as follows.

\begin{itemize}
\item Vertices of $\mathcal{D}(S)$ are isotopy classes of compressing disks for $S$.
\item A collection of $k+1$ distinct vertices forms a $k$-simplex if there are pairwise disjoint representatives.
\end{itemize}

A surface $S$ is {\em topologically minimal} if
$\mathcal{D}(S)$ is empty or $\pi_i(\mathcal{D}(S))$ is non-trivial for some $i$.
A {\em topological index} of $S$ is $0$ if $\mathcal{D}(S)$ is empty, and
the smallest $n$ such that $\pi_{n-1}(\mathcal{D}(S))$ is non-trivial, otherwise.
The topological indices of an incompressible surface, a strongly irreducible surface and
a critical surface are $0,1,2$ respectively.
There exist topologically minimal surfaces of high index, e.g. \cite{Bachman-Johnson}, \cite{Lee}.

In general, it is not easy to understand topologically minimal surfaces of high index.
A motivation of this paper is to understand
the structure of a disk complex of a topologically minimal surface of high index$(\ge 2)$.
As a first step we study about $2$-dimensional simplices of a disk complex,
i.e. a triple of mutually disjoint compressing disks.
The situation we consider in Section \ref{sec2} is that
there is still a weak reducing pair after we compress a Heegaard surface once.
We show that for a genus $g \ge 3$ Heegaard splitting of $S^3$ and a given (generic) weak reducing pair $(D,E)$,
there exists a weak reducing triple $(D,D',E)$ or $(D,E,E')$, say $(D,D',E)$, such that
$(D',E)$ is a weak reducing pair after compressing along $D$ (Theorem \ref{thm2.3}).

In Section \ref{sec3}, we consider the case of two weak reducing pairs $(D_1,E_1)$ and $(D_2,E_2)$ such that
adding two disjoint $2$-handles $N(D_1)$ and $N(D_2)$ results a boundary-reducible manifold.
We apply this observation to a boundary-reducible $3$-manifold $M$.
We show that if $\partial M$ is a genus $g$ surface and $M$ admits a genus $g+1$ Heegaard surface $S$,
then the disk complex of $S$ is simply connected (Theorem \ref{thm3.3}).
In Section \ref{sec4}, we consider a question on the connectedness of the complex of reducing spheres.
We investigate the intersection of two reducing spheres
for a genus three Heegaard splitting of $\mathrm{(torus)} \times I$ (Proposition \ref{prop4.5}).

\section{A weak reducing pair after a compression}\label{sec2}

For a closed $3$-manifold $M$, a {\em Heegaard splitting} $V \cup_S W$ is
a decomposition of $M$ into two handlebodies $V$ and $W$,
where $M = V \cup W$ and $V \cap W = \partial V = \partial W = S$.

A Heegaard splitting can be defined also for a $3$-manifold with non-empty boundary.
Let $S$ be a closed orientable surface.
A {\em compression body} $V$ is a connected $3$-manifold obtained from $S \times I$
by attaching $2$-handles to $S \times \{1\}$
and capping of any resulting $2$-sphere boundary components with $3$-balls.
The surface $S \times \{0\}$ is denoted by $\partial_+ V$ and $\partial V - \partial_+ V$ is denoted by $\partial_- V$.
If $\partial_- V = \emptyset$, then it is a handlebody.
A collection of essential disks of $V$ is called a {\em complete essential disk system}
if cutting $V$ along the disks results $\partial_-V \times I$ when $\partial_- V \ne \emptyset$
and a $3$-ball when $\partial_- V = \emptyset$.
For a $3$-manifold $M$, a {\em Heegaard splitting} $V \cup_S W$
is a decomposition of $M$ into two compression bodies $V$ and $W$,
where $M = V \cup W$ and $V \cap W = \partial_+ V = \partial_+ W = S$ and $\partial M = \partial_- V \cup \partial_- W$.

Sometimes even though it is not a Heegaard splitting, we will use the notation $V \cup_S W$ if $V \cap W = S$.

\begin{definition}\label{def1}
For $V \cup_S W$, two essential disks $D \subset V$ and $E \subset W$
are called a {\em weak reducing pair}, denoted by $(D,E)$, if $D \cap E = \emptyset$.
A weak reducing pair $(D,E)$ is {\em generic} if $D$ is essential in $V \cup N(E)$ and $E$ is essential in $W \cup N(D)$.
\end{definition}

\begin{definition}\label{def2}
For $V \cup_S W$, three essential disks $D_1, D_2 \subset V$ and $E \subset W$
are called a {\em weak reducing triple}, denoted by $(D_1,D_2,E)$, if $D_1 \cap D_2 = D_1 \cap E = D_2 \cap E = \emptyset$.
Similarly, three essential disks $D \subset V$ and $E_1,E_2 \subset W$
are called a {\em weak reducing triple}, denoted by $(D,E_1,E_2)$, if $D \cap E_1 = D \cap E_2 = E_1 \cap E_2 = \emptyset$.
\end{definition}

Let $V \cup_S W$ be a Heegaard splitting of $S^3$ of genus $g \ge 3$.
Let $(D,E)$ be a weak reducing pair.
It is easy to find a weak reducing triple containing $(D,E)$.
If $D$ is separating in $V$, we can take a nonseparating disk $D'$ in a component of $V-D$
such that $(D,D',E)$ is a weak reducing triple.
So we may assume that $D$ is a nonseparating disk.
Take two parallel copies of $D$ and attach a band along an arc in $\partial V - \partial E$ to get a disk $D'$ in $V$.
Then $(D,D',E)$ is a weak reducing triple.
But such a disk $D'$ becomes inessential if we compress $V$ along $D$.

So what is interesting is a weak reducing triple $(D,D',E)$ such that
$(D',E)$ is a weak reducing pair after we compress $V$ along $D$,
or equivalently after we add a $2$-handle $N(D)$ to $W$.
Here we assume that $(D,E)$ is generic to ensure that $E$ is essential after the compression.
Let $V'$ be the handlebody obtained by compressing $V$ along $D$.
However, it would not be possible to find a weak reducing pair $(D',E)$ if
$\partial E$ is a disk-busting curve in $\partial V'$. See Figure \ref{fig1}.

\begin{figure}[ht!]
\begin{center}
\includegraphics[width=7cm]{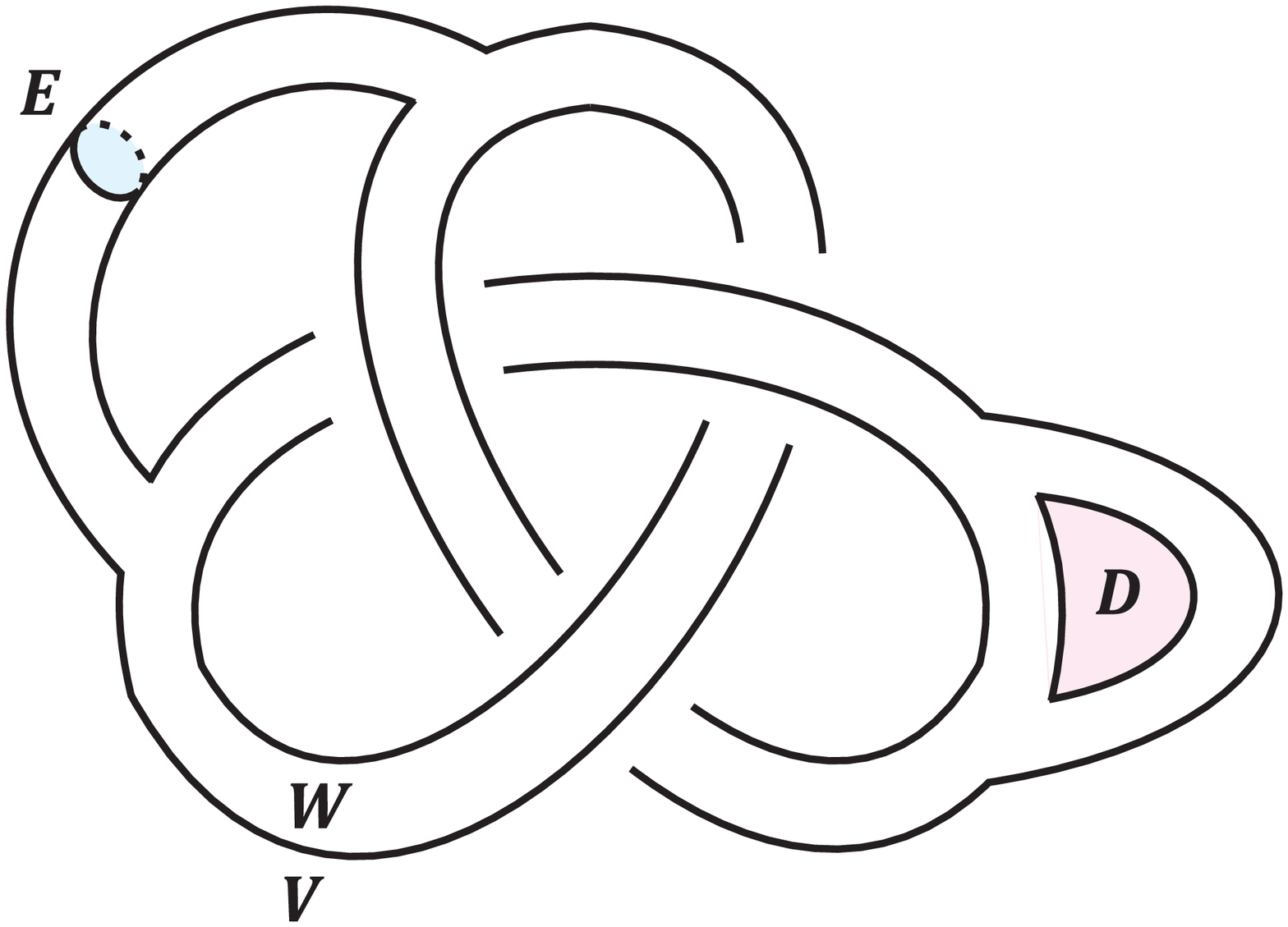}
\caption{}\label{fig1}
\end{center}
\end{figure}

Nevertheless in that case, we can find a weak reducing triple $(D,E,E')$ such that
$(D,E')$ is a weak reducing pair after compressing $W$ along $E$.

\begin{theorem}\label{thm2.3}
Suppose that $(D,E)$ is a generic weak reducing pair for a genus $g \ge 3$ Heegaard splitting $V \cup _S W$ of $S^3$.
Then there exists either a weak reducing triple $(D,D',E)$ or $(D,E,E')$, say $(D,D',E)$, such that
$(D',E)$ is a weak reducing pair after compressing $V$ along $D$.
\end{theorem}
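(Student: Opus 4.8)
The plan is to argue by a dichotomy according to whether $\partial E$ is disk-busting in the handlebody $V_D$ obtained by compressing $V$ along $D$. Since $(D,E)$ is generic, $E$ stays essential after the compression, so $E$ is an essential disk in the complementary handlebody $W^+ = W \cup N(D)$, and $V_D \cup_{S_D} W^+$ is a genus $g-1$ Heegaard splitting of $S^3$ with $g-1 \ge 2$. If $\partial E$ is \emph{not} disk-busting in $V_D$, there is an essential disk $D'$ of $V_D$ with $\partial D' \cap \partial E = \emptyset$. As $D' \subset V_D = V - N(D)$ it is automatically disjoint from $D$, and because $\partial D'$ is disjoint from the nonseparating curve $\partial D$ one checks that $\partial D'$ is essential on $S$ as well (any disk it bounds on $S$ would have to avoid the nonseparating $\partial D$ and would survive into $V_D$); hence $D'$ is essential in $V$, the triple $(D,D',E)$ is a weak reducing triple, and $(D',E)$ is a weak reducing pair in $V_D \cup W^+$. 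This produces the first type of triple.

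The remaining case is that $\partial E$ is disk-busting in $V_D$. I would first show this forces $\partial E$ to be separating on $S_D$. Indeed, if $\partial E$ were nonseparating on $S_D$, then compressing $W^+$ along $E$ would yield a genus $g-2$ Heegaard splitting of $S^3$ one of whose handlebodies is $V_D \cup N(E)$; thus $\partial E$ would be primitive in $V_D$, and a primitive curve in a handlebody of genus $\ge 2$ is never disk-busting (its defining handle basis contains a second disk disjoint from the curve). Symmetrically, I would try to produce the second type of triple $(D,E,E')$ by compressing $W$ along $E$: this succeeds exactly when $\partial D$ is not disk-busting in $W_E$, and the same primitivity argument shows that if it failed, then $\partial D$ would be separating on $S_E$.

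So the whole theorem comes down to excluding the possibility that $\partial E$ is separating and disk-busting in $V_D$ while $\partial D$ is separating and disk-busting in $W_E$; this is the main obstacle. The first reduction I would use here is homological: a curve disjoint from $\partial D$ is separating on $S_D$ if and only if its class lies in $\langle[\partial D]\rangle \subset H_1(S)$, so the two separation hypotheses force $[\partial E] = \pm[\partial D]$ on $S$. Consequently $\partial D$ and $\partial E$ cobound a subsurface $F \subset S$, and capping its two boundary curves with $D \subset V$ and $E \subset W$ yields a closed surface $\Sigma_F \subset S^3$; disk-bustingness rules out $F$ being an annulus (otherwise $\partial E$ would be parallel to $\partial D$ and bound a scar disk in $V_D$), so $\Sigma_F$ has positive genus. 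The plan is then to extract from the disk-busting hypotheses that the two pieces into which $\partial E$ cuts $S_D$ are incompressible in $V_D$, hence in $V$; cap the piece not containing the scars of $D$ with $E$ to obtain a closed surface in $S^3$ that is incompressible on the $V$-side; and derive a contradiction from the fact that a positive-genus closed surface in $S^3$ must compress, where the forced $W$-side compression, carried across the disjointness of $D$ and $E$, supplies an essential disk of $W_E$ missing $\partial D$, contradicting that $\partial D$ is disk-busting there.

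I expect the hard part to be exactly this last step: controlling the interaction of the two separating curves, which live on surfaces obtained by compressing $S$ on opposite sides, and converting the unavoidable compression of $\Sigma_F$ in $S^3$ into a compressing disk on the correct side and off both $D$ and $E$ (an outermost-arc argument should push it into $W_E$). The genericity of $(D,E)$ and the hypothesis $g \ge 3$, so that both $V_D$ and $W_E$ have genus $\ge 2$ and the twice-compressed splitting still has positive genus, are what make the incompressibility and primitivity inputs available.
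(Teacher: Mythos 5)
Your easy case (when $\partial E$ fails to be disk-busting in $V_D$) is correct, but the entire content of the theorem lies in the remaining case, and there the proposal has real gaps. First, the primitivity step is unjustified: compressing $W^+=W\cup N(D)$ along $E$ does not in general produce a Heegaard splitting of $S^3$, because $V_D\cup N(E)$ --- a handlebody with a $2$-handle attached --- need not be a handlebody; whether $2$-handle addition preserves boundary-compressibility is precisely the issue that Jaco's handle addition theorem addresses, so you cannot simply assert that $\partial E$ is primitive in $V_D$ and the reduction to ``both curves separating'' collapses. Second, even granting that reduction, the homological step ignores the possibility $[\partial E]=0$ (i.e.\ $\partial E$ separating on $S$ itself), which also puts $[\partial E]$ in $\langle[\partial D]\rangle$ without giving $[\partial E]=\pm[\partial D]$. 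Third, and decisively, the final contradiction --- converting the unavoidable compression of the positive-genus closed surface into a compressing disk on the correct side and disjoint from the relevant curve --- is only announced as a plan, and you flag it yourself as the hard part. No mechanism is supplied for landing that compressing disk where it is needed, and that conversion is the whole theorem.

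For comparison, the paper's proof needs none of the dichotomy. It compresses $S$ along $D\cup E$ simultaneously; the resulting surface $S'$ has a component of positive genus (total genus at least $g-2\ge 1$), hence is compressible in $S^3$, and an innermost-disk argument produces a compressing disk $D_0$ with interior disjoint from $S'$, lying on one side or the other. If $D_0$ lies on the side $V'\cup N(E)$ with $V'=\mathrm{cl}(V-N(D))$, then both $V'$ and $V'\cup N(E)$ have compressible boundary, so Jaco's handle addition theorem applied to the handle $N(E)$ yields a compressing disk $D'$ for $V'$ disjoint from $\partial E$, giving the triple $(D,D',E)$; the other side gives $(D,E,E')$ symmetrically. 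The point you are missing is that the ``or'' in the conclusion means you never have to defeat the disk-busting case on the $V$-side: the compressing disk of the doubly compressed surface chooses the side for you, and Jaco's theorem (which your proposal never invokes) is exactly the tool that places the new disk in the chosen compression body and off the relevant boundary curve. Your closing idea --- that a positive-genus closed surface in $S^3$ must compress --- is the right germ, but without the handle addition theorem it does not close.
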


\begin{proof}
Compress $S$ along $D \cup E$ and let $S'$ be the resulting surface.
Since every closed surface of positive genus in $S^3$ is compressible, we can find a compressing disk for a component of $S'$.
If the compressing disk intersects other components (if any),
we can find another compressing disk whose interior is disjoint from $S'$ by standard innermost disk argument.
Without loss of generality, let the compressing disk denoted by $D_0$ compress $S'$ into $V$.

Let $V' = \mathrm{cl}(V - N(D))$.
Then $V'$ has compressible boundary and $V' \cup N(E)$ also has compressible boundary because of the compressing disk $D_0$.
By Jaco's handle addition theorem \cite{Jaco},
there exists a compressing disk $D'$ in $V'$ which is disjoint from $\partial E$.
Then $(D,D',E)$ is the desired weak reducing triple such that $(D',E)$ is a weak reducing pair for $\partial V'$.
\end{proof}

\section{Adding two disjoint $2$-handles}\label{sec3}

Now we consider a slightly different situation.
Let $(D_1,E_1)$ and $(D_2,E_2)$ be two weak reducing pairs for a Heegaard splitting $V \cup_S W$
and $D_1 \cap D_2 = \emptyset$.
When is a weak reducing triple $(D_1,D_2,E)$ possible for some disk $E \subset W$?
A natural condition would be that $V$ compressed along $D_1 \cup D_2$ results a handlebody
whose boundary is compressible in the exterior.
An equivalent statement is that if we add two disjoint $2$-handles $N(D_1)$ and $N(D_2)$ to $W$,
then the resulting manifold $W'$ is boundary-reducible.
But the interior of a boundary-reducing disk for $W'$ may intersect $N(D_1)$ and $N(D_2)$.
Using Jaco's handle addition theorem and a generalization of it by Wu \cite{Wu},
we show that there exists a compressing disk for $W$ that is disjoint from $D_1$ and $D_2$.

\subsection{A generalization of the handle addition theorem}\label{subsec3.1}

Let $F$ be a surface in the boundary of a $3$-manifold $M$, and let $\gamma$ be a simple closed curve in $F$.
We call a compressing disk $D$ for $F$ an {\em $n$-compressing disk} (with respect to $\gamma$)
if $\partial D$ intersects $\gamma$ in $n$ points.
The surface $F$ is called {\em $n$-compressible} if an $n$-compressing disk exists.

Given a simple closed curve $J$ in $F$,
let $M'= \tau (M\,;\,J)$ be the manifold obtained by attaching a $2$-handle to $M$ along $J$.
Let $F' = \sigma (F\,;\,J)$ be the resulting surface obtained from $F$ after the $2$-handle addition.

\begin{theorem}[Wu]\label{thm_Wu}
Let $\gamma$ and $J$ be disjoint simple closed curves in $F$.
Suppose that $F - \gamma$ is compressible.
If $F'$ is $n$-compressible, then $F-J$ is $k$-compressible for some $k \le n$.
\end{theorem}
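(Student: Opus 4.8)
The plan is to run the proof of the classical handle addition theorem of Jaco \cite{Jaco} on the hypothesized $n$-compressing disk, while keeping a ledger of intersections with $\gamma$; the point will be that the disk one extracts has boundary built out of a sub-arc of the given disk together with arcs that avoid $\gamma$, so the bound $\le n$ is essentially automatic. First I would fix the geometry of the surgery. Write $M' = \tau(M\,;\,J) = M \cup H$, where $H \cong D^2 \times I$ is the attached $2$-handle, glued along the annulus $A = A(J) = \partial D^2 \times I$, a regular neighbourhood of $J$ in $F$; the two caps $D_0, D_1 = D^2 \times \{0,1\}$ replace $A$, so $F' = \sigma(F\,;\,J) = (F - \mathrm{int}\,A) \cup D_0 \cup D_1$, and in $M'$ the annulus $A$ is a properly embedded ``frontier'' separating $M$ from $H$, with $\partial A = \partial D_0 \cup \partial D_1 \subset F'$. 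Since $\gamma \cap J = \emptyset$ I may isotope $\gamma$ into $F - A$, so that $\gamma$ is simultaneously a curve of $F$ and of $F'$. In these terms the hypotheses supply a compressing disk $P$ for $F$ with $P \cap \gamma = \emptyset$ (this is ``$F - \gamma$ is compressible'') and a compressing disk $E$ for $F'$ with $|\partial E \cap \gamma| = n$; the goal is a compressing disk $Q$ for $F$ with $Q \cap J = \emptyset$ and $|\partial Q \cap \gamma| \le n$.

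Next I would put $E$ in minimal position with respect to $A$. After standard innermost-disk surgeries, which use the irreducibility of $M$ (valid in all our applications, e.g.\ $M \subset S^3$), I may assume $E \cap A$ is a union of arcs, each properly embedded in $E$ with endpoints on $\partial E \cap \partial A$. If $E \cap A = \emptyset$ then $E$ lies in $M$ (it cannot lie in the ball $H$ without being inessential), with $\partial E \subset F - A$, so $\partial E \cap J = \emptyset$ and I take $Q = E$, with $|\partial Q \cap \gamma| = n$. Otherwise let $a$ be an arc of $E \cap A$ that is outermost in $E$, cutting off a sub-disk $\delta \subset E$ with $\partial \delta = a \cup b$, where $b$ is a sub-arc of $\partial E$ and $a \subset A$. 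If $\delta \subset H$ then $b$ lies on a single cap and $\delta$ is boundary-parallel in the ball $H$, so isotoping $E$ across $\delta$ lowers $|E \cap A|$, contradicting minimality. Hence $\delta \subset M$, a disk with $\partial \delta = a \cup b \subset F$. When $a$ is inessential in $A$ it can be pushed off the core $J$, so $\delta$ is a compressing disk for $F$ disjoint from $J$ as soon as $\partial \delta$ is essential in $F$. Here is the key bookkeeping: because $a \subset A$ is disjoint from $\gamma$ and $b \subset \partial E$, one has $|\partial \delta \cap \gamma| = |b \cap \gamma| \le |\partial E \cap \gamma| = n$, so any compressing disk produced this way already meets $\gamma$ in at most $n$ points.

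The remaining case is that every outermost arc is essential (spanning) in $A$, so that each outermost disk $\delta \subset M$ meets $J$ once and does not by itself furnish a disk disjoint from $J$. This is exactly the situation in which the helper disk $P$ is needed, and it is handled as in Jaco's argument \cite{Jaco}: one combines $P$ with $E$ — putting them in minimal position and band-summing or surgering along their arcs of intersection — to manufacture a compressing disk for $F$ that is disjoint from $J$. The crucial refinement over the classical statement is that the hypothesis is ``$F - \gamma$ is compressible'' rather than merely ``$F$ is compressible'': since $P$ avoids $\gamma$, and all the reroutings in this step run along $A$ or along $P$ — both disjoint from $\gamma$ — the resulting disk still meets $\gamma$ in at most $n$ points. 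The same principle disposes of the essentiality issue for $\partial \delta$ above: a boundary-parallel $\delta$ is used only to isotope $E$ and reduce $|E \cap A|$, again without creating new intersections with $\gamma$.

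The hard part will be this last, essential-arc case: proving that $P$ together with the spanning outermost disks really assembles into an \emph{essential} compressing disk disjoint from $J$ is the genuine content of the handle addition theorem, and the new subtlety is to certify that the assembly never drags a boundary arc across $\gamma$ — which is precisely what forces $P$ to be $\gamma$-free and all modifications to be confined to a neighbourhood of $A \cup H$ together with $P$. As a sanity check on the count, the case $n = 0$ should fall out transparently: there $\partial E$ misses $\gamma$, so $E$ is a compressing disk for $\sigma(F - N(\gamma)\,;\,J)$ and $F - N(\gamma)$ is compressible, whence applying the classical theorem \cite{Jaco} to $F - N(\gamma)$ with the curve $J$ yields a compressing disk disjoint from both $\gamma$ and $J$, i.e.\ the desired $0$-compressing disk.
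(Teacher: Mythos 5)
First, note that the paper offers no proof of this statement: it is quoted from Wu \cite{Wu} and used as a black box in the proof of Lemma \ref{lem3.2}, so there is no in-paper argument to compare yours against; what follows measures your sketch against what a complete proof must contain.

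Your setup and the two easy cases are fine: if $E$ can be isotoped off the attaching annulus $A$ it already serves as the desired disk, and an outermost sub-disk $\delta \subset E$ cut off by an arc of $E \cap A$ that is inessential in $A$ satisfies $|\partial\delta \cap \gamma| = |b \cap \gamma| \le n$ because its other boundary arc lies in $A$, which misses $\gamma$. The gap is exactly where you say it is, and it is not a small one. In the case where every outermost arc of $E \cap A$ spans $A$, you write that one ``combines $P$ with $E$ \dots as in Jaco's argument'' and that the resulting disk still meets $\gamma$ at most $n$ times since ``all the reroutings run along $A$ or along $P$.'' That sentence is the entire content of the theorem, and it is not automatic: the disk produced by the handle-addition construction (Jaco's combinatorial argument, Johannson's simplification, or the sutured-manifold version) has boundary that in general traverses \emph{several} sub-arcs of $\partial E$, and nothing in your ledger prevents it from using parallel copies of the same sub-arc, so the intersections with $\gamma$ are a priori counted with multiplicity and can exceed $n$. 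Showing that the final disk can always be arranged to use disjoint sub-arcs of $\partial E$ (or a single one) is precisely the refinement Wu had to prove; without it the bound $k \le n$ is asserted, not derived. A secondary imprecision: you read ``$F - \gamma$ is compressible'' as ``there is a compressing disk for $F$ disjoint from $\gamma$,'' but a compressing disk for the surface $F - \gamma$ need only have boundary essential in $F - \gamma$ (it could be parallel to $\gamma$ in $F$); the same distinction affects your $n=0$ sanity check, where a compressing disk for $(F - N(\gamma)) - J$ need not be a compressing disk for $F$. Neither issue is fatal to the strategy, but both must be resolved before this is a proof rather than a plausible plan; as it stands, you should simply cite \cite{Wu}, as the paper does, or reproduce Wu's induction in full.
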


Note that if $\gamma = \emptyset$ and $n=0$, then it is the original Jaco's handle addition theorem.

By above result we can show the following.

\begin{lemma}\label{lem3.2}
Let $(D_1,E_1)$ and $(D_2,E_2)$ be two weak reducing pairs for a Heegaard splitting $V \cup_S W$
and $D_1 \cap D_2 = \emptyset$.
Suppose the manifold $W'$ obtained by adding two disjoint $2$-handles $N(D_1)$ and $N(D_2)$ to $W$ is boundary-reducible.
Then there exists a compressing disk for $W$ that is disjoint from both $D_1$ and $D_2$.
\end{lemma}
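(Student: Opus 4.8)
The plan is to peel off the two $2$-handles one at a time, applying the handle addition theorem in the form of Theorem \ref{thm_Wu} twice, arranging that the relevant compressing disk never meets $\partial D_2$ so that the intersection number forced by Wu's theorem is $0$ at each stage. Before starting I would record where the boundary-reducibility of $W'$ actually lives. The handles $N(D_1)$ and $N(D_2)$ are attached along $\partial D_1, \partial D_2 \subset \partial_+ W = S$, so $\partial_- W' = \partial_- W$. Since $\partial_- W$ is incompressible in the compression body $W$, a standard innermost-disk and outermost-arc argument pushing a hypothetical compression of $\partial_- W'$ off the cocores of $N(D_1)$ and $N(D_2)$ shows that $\partial_- W'$ stays incompressible in $W'$. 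Hence a boundary-reducing disk for $W'$ may be taken on the positive side, i.e. $\partial_+ W' = \sigma(\sigma(S;\partial D_1);\partial D_2)$ is compressible.

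For the inner step, set $W_1 = \tau(W;\partial D_1)$ and $F_1 = \sigma(S;\partial D_1)$, so that $W' = \tau(W_1;\partial D_2)$ and $\partial_+ W' = \sigma(F_1;\partial D_2)$. First I would check that $F_1$ is compressible in $W_1$: the curve $\partial E_1$ lies on $F_1$, since it is disjoint from $\partial D_1$ because $D_1 \cap E_1 = \emptyset$, and it is essential there, as an essential curve of $S$ disjoint from $\partial D_1$ can become inessential in $F_1$ only if it is parallel to $\partial D_1$, a degenerate case to be dispatched separately. Thus $E_1$ is a compressing disk for $F_1$. Now I would apply Theorem \ref{thm_Wu} in the case $\gamma = \emptyset$ (Jaco's original theorem) to $M = W_1$, $F = F_1$, $J = \partial D_2$: since $F_1$ and $\sigma(F_1;\partial D_2) = \partial_+ W'$ are both compressible, $F_1 - \partial D_2$ is compressible. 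Equivalently, $\sigma(S;\partial D_1)$ is $0$-compressible with respect to $\partial D_2$.

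For the outer step I would apply Theorem \ref{thm_Wu} to $M = W$, $F = S$, with reference curve $\gamma = \partial D_2$ and handle curve $J = \partial D_1$; these are disjoint because $D_1 \cap D_2 = \emptyset$. The hypothesis that $S - \partial D_2$ is compressible holds because $E_2$ is a compressing disk for $W$ with $\partial E_2 \cap \partial D_2 = \emptyset$. By the inner step, $\sigma(S;\partial D_1)$ is $n$-compressible with respect to $\partial D_2$ with $n = 0$, so Theorem \ref{thm_Wu} yields that $S - \partial D_1$ is $k$-compressible with respect to $\partial D_2$ for some $k \le 0$, forcing $k = 0$. Such a $k = 0$ disk is a compressing disk for $W$ whose boundary misses both $\partial D_1$ and $\partial D_2$, i.e. a compressing disk for $W$ disjoint from $D_1$ and $D_2$, as desired.

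The main obstacle I anticipate is the bookkeeping of intersection numbers so that the vanishing count really propagates: one must ensure that each intermediate compressing disk is realized on the $\partial_+$ component, so that Theorem \ref{thm_Wu} applies and the compression is not absorbed by $\partial_-$, and that $E_1$ and $E_2$ genuinely remain essential after the surgeries, handling the degenerate case in which $\partial E_1$ or $\partial E_2$ is parallel to $\partial D_1$ or $\partial D_2$.
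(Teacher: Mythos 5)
Your overall route is exactly the paper's: compress $\sigma(S;\partial D_1)$ using $E_1$, apply Jaco's theorem to the $D_2$-handle to get that $\sigma(S;\partial D_1)$ is $0$-compressible with respect to $\partial D_2$, then apply Wu's theorem with $\gamma=\partial D_2$, $J=\partial D_1$ and the compressing disk $E_2$ to conclude that $S-\partial D_1$ is $0$-compressible with respect to $\partial D_2$. However, there is a genuine gap at the point you label ``a degenerate case to be dispatched separately'': the case where $\partial E_1$ becomes inessential in $\sigma(S;\partial D_1)$. First, your characterization of this case is wrong --- an essential curve of $S$ disjoint from $\partial D_1$ can die in the surgered surface not only when it is parallel to $\partial D_1$, but also when it bounds a disk of $\sigma(S;\partial D_1)$ containing \emph{both} scars of $D_1$ (i.e.\ $\partial E_1$ cobounds a pair of pants in $S$ with two parallel copies of $\partial D_1$); this is precisely the non-generic configuration flagged in Definition \ref{def1}, and it is not a fringe case. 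Second, in this case $E_1$ is simply not a compressing disk for $\sigma(S;\partial D_1)$, so your inner step has no hypothesis to feed into Jaco's theorem, and the whole chain breaks. The paper's proof devotes its Claim entirely to this situation: the disk $\Delta$ in $\sigma(S;\partial D_1)$ bounded by $\partial E_1$ is pushed into $V$ to form a reducing sphere $\Delta'\cup E_1$, and a compressing disk for $\sigma(S;\partial D_1)$ in $\tau(W;\partial D_1)$ is then extracted from the lower-genus summand. You need some such argument; deferring it leaves the proof incomplete.

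A secondary point: your justification that $\partial_- W'$ remains incompressible after the $2$-handle additions is not sound. A $2$-handle has no properly embedded cocore disk to run an innermost-disk argument against, and in general attaching a $2$-handle along $\partial_+W$ \emph{can} compress $\partial_-W$ (e.g.\ attach a $2$-handle to $(\mathrm{torus})\times I$ along an essential curve of the torus; the loop theorem then compresses the other torus boundary). Your instinct to worry about where the boundary-reduction of $W'$ lives is reasonable --- the paper silently takes $\partial W'=S''$, i.e.\ works as if $W$ were a handlebody, which is the case in its application in Theorem \ref{thm3.3} --- but the argument you offer for the general compression-body case does not work as stated.
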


\begin{proof}
Let $S'$ be the surface $\sigma (S\,;\,\partial D_1)$.

\begin{claim}
$S'$ is compressible in $\tau (W\,;\,\partial D_1)$.
\end{claim}

\begin{proof}[Proof of Claim]
Suppose that $\partial E_1$ is inessential in $S'$.
Then $\partial E_1$ bounds a disk $\Delta$ in $S'$, and $\Delta$ contains either one or two scars of $D_1$.
In any case, push the interior of $\Delta$ slightly into $V$ to get a disk $\Delta'$.
Then $\Delta' \cup E_1$ is a reducing sphere for $V \cup_S W$.
So we can find an essential disk disjoint from $D_1$ and $\Delta' \cup E_1$ in a lower genus Heegaard splitting,
which is a compressing disk for $S'$ in $\tau (W\,;\,\partial D_1)$

If $\partial E_1$ is essential in $S'$,
then $S'$ is obviously compressible because $E_1$ is a compressing disk.
\end{proof}

Let $S''$ be the surface $\sigma (S'\,;\,\partial D_2)$.
Since $S'' = \partial W'$, by hypothesis $S''$ is compressible.
Hence by Jaco's handle addition theorem, $S' - \partial D_2$ is compressible.
It means that $S'$ is $0$-compressible with respect to $\partial D_2$.
Also note that $S - \partial D_2$ is compressible in $W$ because $E_2$ is a compressing disk.

Summarizing, we have the following.

$\bullet$ $S-\partial D_2$ is compressible in $W$.

$\bullet$ The surface $S'=\sigma(S\,;\,\partial D_1)$ is $0$-compressible with respect to $\partial D_2$.

By Theorem \ref{thm_Wu}, $S - \partial D_1$ is $0$-compressible with respect to $\partial D_2$.
Then there exists a compressing disk $E$ in $W$ which is disjoint from $\partial D_1$ and $\partial D_2$.
\end{proof}

\begin{theorem}\label{thm3.3}
Let $M$ be a boundary-reducible manifold and $\partial M$ be a genus $g$ surface.
If $M$ admits a genus $g+1$ Heegaard surface $S$, then the disk complex of $S$ is simply connected.
\end{theorem}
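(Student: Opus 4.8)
The plan is to reduce everything to the disk complex of a handlebody, which is known to be contractible, and to use Lemma~\ref{lem3.2} to fill in the excursions a loop makes into the other side. First I would normalize the splitting $V\cup_S W$. Since $\partial M$ is connected of genus $g$, it lies in a single compression body, so we may label the sides so that $W$ is a compression body with $\partial_- W=\partial M$ and $V$ is a handlebody of genus $g+1$. Because $\mathrm{genus}(S)=g+1$, the compression body $W$ is obtained from $\partial M\times I$ by attaching exactly one $1$-handle. Writing $\mathcal{D}_V,\mathcal{D}_W$ for the subcomplexes of $\mathcal{D}(S)$ spanned by disks lying in $V$ and in $W$, I would record two structural facts. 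Since $V$ is a handlebody, $\mathcal{D}_V$ is contractible, in particular simply connected. Since $W$ carries a single $1$-handle, compressing it along any essential disk yields $\partial M\times I$, whose boundary is incompressible; hence any essential disk of $W$ disjoint from a given one is boundary-parallel after the compression, so the two are isotopic and $\mathcal{D}_W$ has \emph{no edges}. Consequently every $1$- and $2$-simplex of $\mathcal{D}(S)$ has at most one vertex in $\mathcal{D}_W$, and along any edge-loop the $W$-vertices occur in isolation.

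Next I would take a class in $\pi_1(\mathcal{D}(S))$, represent it by an edge-loop $\ell$, and use connectivity of $\mathcal{D}_V$ to homotope $\ell$ so that it meets $\mathcal{D}_W$ only in isolated vertices; a typical excursion then has the form $D_a,\,E,\,D_b$ with $D_a,D_b\in\mathcal{D}_V$ and $E\in\mathcal{D}_W$ disjoint from both. The goal is to trade each such excursion for a path lying entirely in $\mathcal{D}_V$. If $D_a\cap D_b=\emptyset$ this is immediate: $(D_a,D_b,E)$ is a weak reducing triple, so the detour $D_a\!-\!E\!-\!D_b$ is homotopic rel endpoints, across that $2$-simplex, to the edge $D_a\!-\!D_b$. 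In general I would instead connect $D_a$ to $D_b$ by a path $D_a=F_0,\dots,F_m=D_b$ in $\mathcal{D}_V$ whose every vertex is disjoint from $E$; the triangles $(F_j,F_{j+1},E)$ then fill the disk between this path and the excursion, so the excursion may be replaced by the path. Iterating removes every $W$-vertex and pushes $\ell$ into the contractible complex $\mathcal{D}_V$, where it bounds.

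The heart of the argument, and the step I expect to be the main obstacle, is the connectivity statement used above: for a fixed $E\in\mathcal{D}_W$, the subcomplex $\mathcal{D}_V(E)=\{D\in\mathcal{D}_V: D\cap E=\emptyset\}$ is connected and contains $D_a,D_b$. To prove it I would take an arbitrary path between $D_a$ and $D_b$ in the connected complex $\mathcal{D}_V$ and push it into $\mathcal{D}_V(E)$ by reducing its intersection with $\partial E$. At a vertex $F_j$ meeting $\partial E$, the two neighbouring disks give weak reducing pairs, and I would apply Lemma~\ref{lem3.2} to the disjoint pair $F_j,F_{j+1}\subset V$ to produce a $W$-disk disjoint from both, so that the offending vertex can be surgered off $\partial E$ while the path stays connected. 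The genuinely delicate point is verifying the hypothesis of Lemma~\ref{lem3.2}, that $W\cup N(F_j)\cup N(F_{j+1})$ is boundary-reducible. This is exactly where the boundary-reducibility of $M$ must be fed in: since $\partial M=\partial_- W$ and attaching the two $2$-handles along curves in $S$ does not touch $\partial M$, one expects a compressing disk for $\partial M$ in $M$ to survive, after an innermost-disk isotopy, into $W\cup N(F_j)\cup N(F_{j+1})$. Making this survival precise—propagating the global boundary-reducibility of $M$ to each cut-open manifold arising along the path—is the main technical hurdle, and the remaining bookkeeping (the reducible or stabilized cases, where $\partial E$ may already bound in $S$ or $\ell$ already lies in $\mathcal{D}_V$) I would dispatch separately.
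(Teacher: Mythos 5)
Your proposal takes a genuinely different route from the paper, and the route has a gap at its load-bearing step. The paper never tries to contract loops in $\mathcal{D}(S)$ directly: it invokes Bachman's result (\cite[Theorem 2.5]{Bachman}) that $\pi_1(\mathcal{D}(S))=1$ is equivalent to $S$ not being critical, and then only needs the much weaker combinatorial fact that any two weak reducing pairs are joined by a chain of weak reducing pairs sharing a disk, which kills any partition $C_0\cup C_1$. Your plan instead rests on the assertion that for every $E\in\mathcal{D}_W$ the subcomplex of $V$-disks disjoint from $E$ is connected. You neither prove this link-connectivity nor can Lemma \ref{lem3.2} produce it: that lemma takes two disjoint disks on one side of $S$ and outputs a single disk on the \emph{opposite} side disjoint from both; it gives no mechanism for surgering a vertex $F_j$ of a path in $\mathcal{D}_V$ off $\partial E$ while keeping the path connected. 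So the step you call the heart of the argument is not merely delicate, it is missing.

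Two further concrete problems. First, your structural claim that $\mathcal{D}_W$ has no edges is false: the compression body obtained from $\partial M\times I$ by attaching one $1$-handle has a unique nonseparating disk, and every separating essential disk can be isotoped off it (the paper uses exactly this fact). Hence $\mathcal{D}_W$ is a cone on the nonseparating disk --- still simply connected, but with many edges, so your normalization ``$W$-vertices occur in isolation along any loop'' fails; your justification (``compressing along any essential disk yields $\partial M\times I$'') holds only for the nonseparating disk. Second, by placing the handlebody on the $V$ side you must feed Lemma \ref{lem3.2} \emph{arbitrary} disjoint disks $F_j,F_{j+1}$ of a genus $g+1$ handlebody and verify that $W\cup N(F_j)\cup N(F_{j+1})$ is boundary-reducible; you rightly flag this as unresolved, and it is genuinely problematic, since a compressing disk for $\partial M$ in $M$ need not be isotopic into that submanifold. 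The paper sidesteps this entirely by applying the lemma only to the two special disks of the one-handled compression body (the unique nonseparating disk $D_1$ and a separating disk disjoint from it): cutting along these yields $\partial M\times I$ plus a $3$-ball, whose exterior inherits boundary-reducibility directly from that of $M$. The repair is to follow that route: reduce to non-criticality, use that every essential disk on the compression-body side meets $D_1$ in the empty set after isotopy, and apply Lemma \ref{lem3.2} once per pair to chain every weak reducing pair to one containing $D_1$.
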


\begin{proof}
Let $V \cup_S W$ be a genus $g+1$ Heegaard splitting of $M$ with $\partial_- V = \partial M$.
By \cite[Theorem 2.5]{Bachman}, $\pi_1(\mathcal{D}(S)) = 1$ is equivalent to that $S$ is not critical.
For any two weak reducing pairs for $S$, it suffices to show that
there is a sequence of weak reducing pairs connecting the two weak reducing pairs.
Then there cannot be a partition $C_0 \cup C_1$ of the disk complex satisfying the criticality
because all weak reducing pairs belongs to only one set of the partition $C_0 \cup C_1$.

Since $V$ is obtained from $\partial_-V \times I$ by attaching one $1$-handle,
the structure of the set of compressing disks of $V$ is rather simple.
There is a unique nonseparating disk $D_1$ of $V$ and any separating essential disk $D_2$ of $V$ is disjoint from $D_1$.
So what remains to prove is that any two weak reducing pairs $(D_1,E_1)$ and $(D_2,E_2)$ for $S$
can be connected by a sequence of weak reducing pairs.
We can see that cutting $V$ along $D_1 \cup D_2$ results in $\partial M \times I$ and a $3$-ball,
whose exterior is a boundary-reducible manifold.
By Lemma \ref{lem3.2} there exists a compressing disk $E$ for $W$ that is disjoint from both $D_1$ and $D_2$.
See Figure \ref{fig2}.

\begin{figure}[ht!]
\begin{center}
\includegraphics[width=5.5cm]{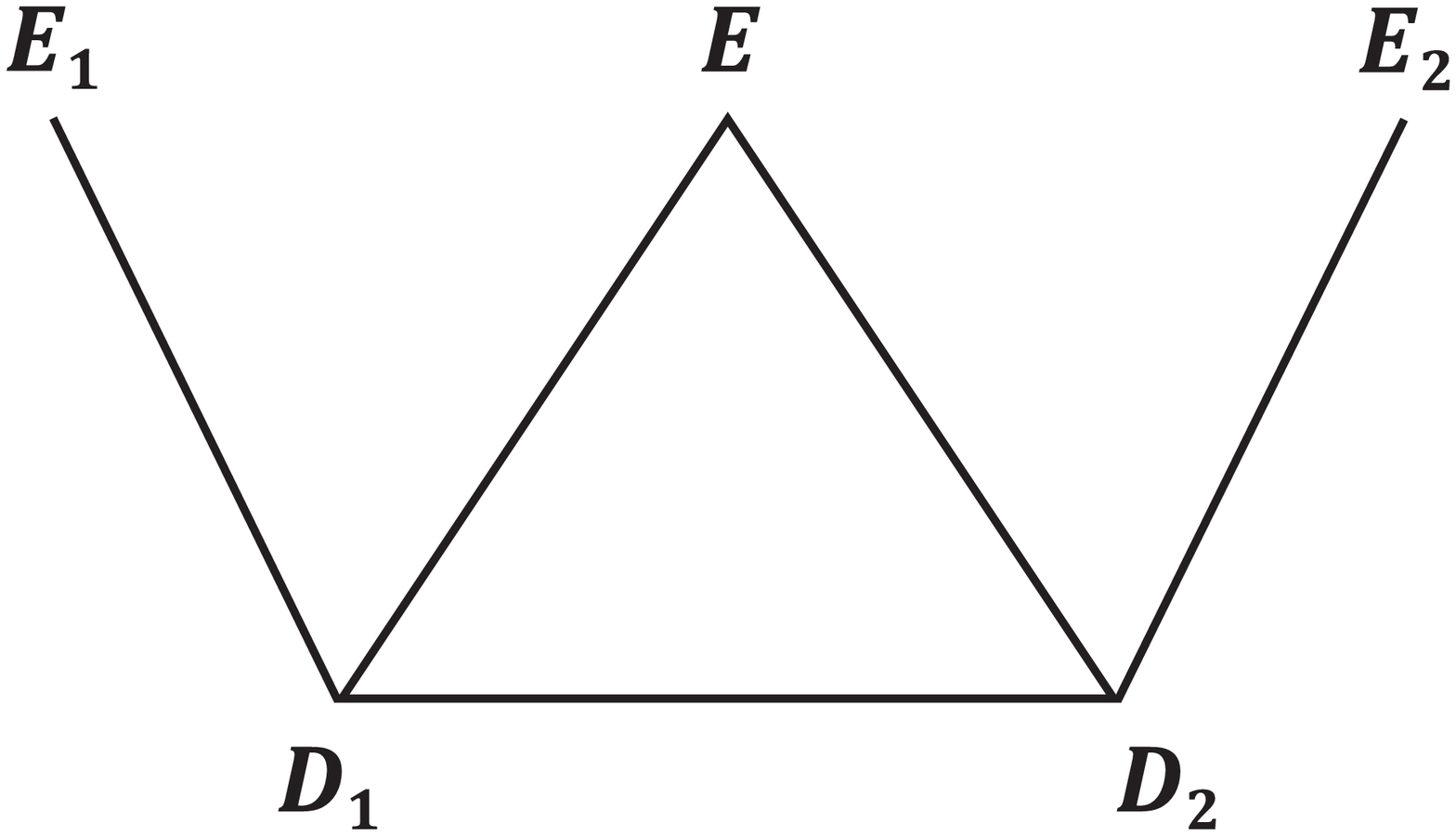}
\caption{}\label{fig2}
\end{center}
\end{figure}

\end{proof}

\section{Connectedness of the complex of reducing spheres}\label{sec4}

Let $S$ be a Heegaard surface of $M$.
A {\em reducing sphere} for $S$ is a sphere that intersects $S$ in a single essential loop.
The complex of reducing spheres for $S$ is defined in a similar manner as a disk complex.
For a genus two Heegaard splitting of $S^3$, the complex of reducing spheres is connected \cite{Scharlemann}.
In genus two case, the complex is connected in the sense that for any two reducing spheres $P$ and $Q$,
there is a sequence of reducing spheres $P_0, P_1, \ldots, P_n$ such that
$P_0 = P$ and $P_n =Q$ and $|P_i \cap P_{i+1}| = 4$ for $i = 0, \ldots, n-1$.
However, it is not known whether the complex of reducing spheres
for a genus $g \ge 3$ Heegaard splitting of $S^3$ is connected or not.

\begin{question}\label{quest4.1}
Is the complex of reducing spheres for a genus $g \ge 3$ Heegaard splitting of $S^3$ connected?
\end{question}

Let $M$ be a product manifold $\mathrm{(torus)} \times [0,1]$ and $V \cup_S W$ be a genus three Heegaard splitting of $M$
with $\partial_- V = \mathrm{(torus)} \times \{ 0 \}$ and $\partial_- W = \mathrm{(torus)} \times \{ 1 \}$.
The Heegaard surface $S$ can be obtained from $\mathrm{(torus)} \times \{ \frac{1}{2} \}$ by stabilizing twice.
So $V \cup_S W$ is a connected sum of a genus one trivial splitting of $M$ and a genus two Heegaard splitting of $S^3$.
The complex of reducing spheres for $S$ is larger than that of a genus two Heegaard surface of $S^3$ and
smaller than that of a genus three Heegaard surface of $S^3$.
Therefore it is worth to consider the complex of reducing spheres for $S$.

Specifically, let $P$ and $Q$ be non-isotopic reducing spheres for $S$ dividing each of $V$ and $W$ into
a $\mathrm{(torus)} \times I$ and a genus two handlebody.
Most of the properties that hold for $P$ and $Q$ also hold when the role of $P$ and $Q$ exchanged.
We investigate the intersection of $P$ and $Q$.
We assume that $P$ and $Q$ intersect transversely and minimally.
The reducing sphere $P$ divides $V$ into a genus two handlebody $V_1$ and a $\mathrm{(torus)} \times I$ component $V_2$.
Similarly the reducing sphere $Q$ divides $V$ into a genus two handlebody $\overline{V_1}$ and
a $\mathrm{(torus)} \times I$ component $\overline{V_2}$.
Let $D = Q \cap V$.
Since $|P \cap Q|$ is minimal, the intersection of two reducing disks $(P \cap V) \cap D$ consists of arcs.
The reducing disk $P \cap V$ is cut into subdisks by $D$, and also $D$ is cut into subdisks by $P \cap V$.
Each such subdisk, say a subdisk $D_i$ of $D$, is a $2n$-gon for some $n$,
where a subarc of $\partial D$ and an arc of $D \cap (P \cap V)$ appears alternately on $\partial D_i$.
Note that a bigon is an outermost disk.

\begin{lemma}\label{lem4.2}
Any outermost disk of $D$ cut by $P \cap V$ is contained in $V_1$.
\end{lemma}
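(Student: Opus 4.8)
The plan is to argue by contradiction and exploit the fact that $V_2$ is a product, hence has incompressible boundary. Let $\Delta$ be an outermost disk of $D$ cut off by a single arc $\alpha$ of $(P\cap V)\cap D$, so that $\partial\Delta=\alpha\cup\beta$ with $\alpha\subset P\cap V$ and $\beta\subset\partial D=Q\cap S$, and the interior of $\Delta$ is disjoint from $P\cap V$. Because its interior misses $P\cap V$, the disk $\Delta$ lies on a single side of the reducing disk, i.e. in $V_1$ or in $V_2$. The goal is to rule out $\Delta\subset V_2$.

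First I would locate $\partial\Delta$ precisely. Assuming $\Delta\subset V_2=(\mathrm{torus})\times I$, the arc $\beta$ lies in $S\cap\partial V_2$, namely in the genus one piece $S_{V_2}$ of $S$ on the $V_2$ side of $p:=P\cap S$, while $\alpha$ lies in the copy of $P\cap V$ that caps off $S_{V_2}$. Since this capped surface is exactly the torus boundary component $T$ of $V_2$ (the other component being $\partial_-V$), we get $\partial\Delta\subset T$. Thus $\Delta$ is a properly embedded disk in the product $V_2$ with boundary on the single torus $T$.

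Next I would invoke the product structure: the boundary tori of $(\mathrm{torus})\times I$ are incompressible and the manifold is irreducible, so $\partial\Delta$ cannot be essential on $T$ and must bound a disk $\delta\subset T$. The key local point is that $\partial\Delta$ meets the curve $c:=p$ (the circle separating the $P\cap V$ part of $T$ from $S_{V_2}$) in exactly two points. Indeed, $\alpha$ is a properly embedded arc in the disk $P\cap V$, so its interior misses $c=\partial(P\cap V)$; and since every point of $p\cap q$, where $q:=Q\cap S$, is an endpoint of an arc of $(P\cap V)\cap D$, the outermostness of $\Delta$ forces $\alpha$ to be the only such arc meeting $\beta$, so the interior of $\beta$ also misses $c$. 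Hence $c\cap\delta$ is a single arc, cutting $\delta$ into a piece $\delta_P$ bounded by $\alpha$ and this arc, and a piece $\delta_S\subset S_{V_2}\subset S$ bounded by $\beta$ and this arc.

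Finally I would derive the contradiction: $\delta_S$ is a disk in $S$ whose boundary consists of a subarc of $q=Q\cap S$ and a subarc of $p=P\cap S$, that is, a bigon between the two reducing curves on $S$. Its existence contradicts the assumption that $P$ and $Q$, equivalently the loops $p$ and $q$, intersect minimally. Therefore $\Delta\subset V_1$. I expect the main obstacle to be the middle step, namely organizing the incompressibility of $T$ together with the count $|\partial\Delta\cap c|=2$ so that $\delta$ restricts to an honest bigon on $S$ rather than a region running partly over the disk $P\cap V$; once the two intersection points with $c$ are pinned down via outermostness, the bigon and the resulting reduction of $|P\cap Q|$ follow.
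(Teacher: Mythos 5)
Your proof is correct and follows essentially the same route as the paper's: both come down to the fact that a properly embedded disk in the product $V_2 \cong \mathrm{(torus)} \times I$ is inessential (boundary-parallel), and then use the resulting parallelism to contradict the minimality of $|P \cap Q|$. You simply make explicit, via the count $|\partial\Delta \cap p| = 2$ and the bigon $\delta_S$ on $S$, the step the paper compresses into ``isotope $\Delta$ into $V_1$ and reduce $|P \cap Q|$.''
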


\begin{proof}
Suppose an outermost disk $\Delta$ is contained in $V_2$.
Since $V_2$ is $\mathrm{(torus)} \times I$, $\Delta$ is an inessential disk in $V_2$.
So we can isotope $\Delta$ into $V_1$ and reduce $|P \cap Q|$, a contradiction.
\end{proof}

The disk $P \cap V$ cuts $\overline{V_1}$ and $\overline{V_2}$ into submanifolds.
Since every properly embedded disk in $\mathrm{(torus)} \times I$ is boundary-parallel,
$\overline{V_2}$ is cut into $3$-ball components and a $\mathrm{(torus)} \times I$ component.
Consider an outermost $3$-ball $B_o$ in $\overline{V_2}$ cut by an outermost disk.
The boundary of $B_o$ consists of three parts. See Figure \ref{fig3} for an example.

\begin{figure}[ht!]
\begin{center}
\includegraphics[width=5cm]{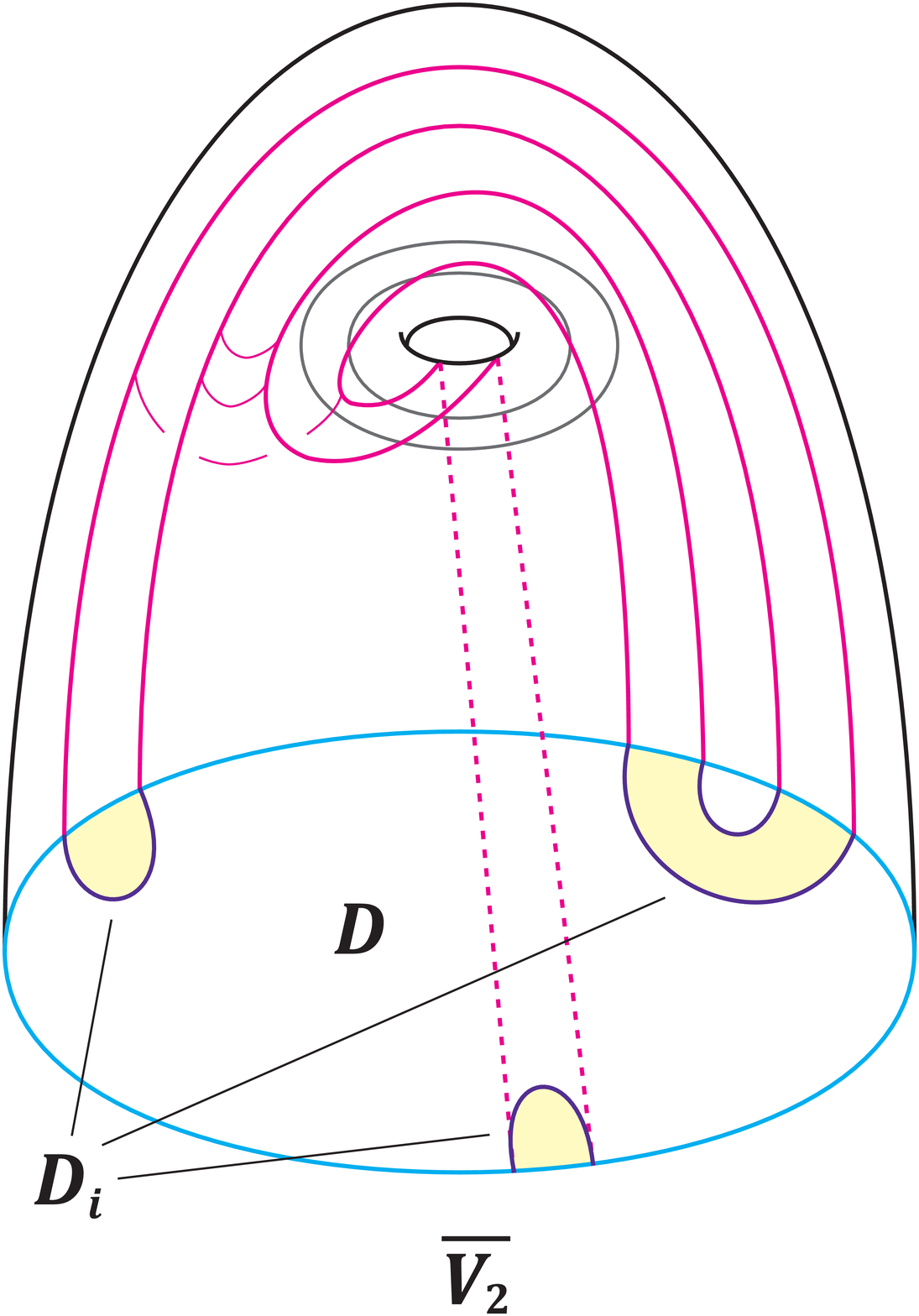}
\caption{An outermost $3$-ball $B_o$ in $\overline{V_2}$}\label{fig3}
\end{center}
\end{figure}

\begin{itemize}
\item a single subdisk $P_0$ of $P \cap V$ cut by $D$.
\item subdisk components $D_i$ of $D$($=Q \cap V$) cut by $P \cap V$.
\item $\partial B_o \cap S$ (possibly disconnected).
\end{itemize}

In other words, $\partial B_o = P_0 \cup (\bigcup D_i) \cup (\partial B_o \cap S)$.
Since $\partial B_o - P_0$ is a disk and
there can be no bigon component in $\partial B_o \cap S$ by minimality of $|P \cap Q|$,
there exist at least two bigons among the $D_i$'s.
By Lemma \ref{lem4.2}, each such bigon is contained in $V_1$.

For $W$, the same properties as above hold.
Let $P$ divide $W$ into a genus two handlebody $W_1$ and a $\mathrm{(torus)} \times I$ component $W_2$.
Let $Q$ divide $W$ into a genus two handlebody $\overline{W_1}$ and
a $\mathrm{(torus)} \times I$ component $\overline{W_2}$.
Let $E = Q \cap W$.
Let $\widetilde{B_o}$ be an outermost $3$-ball in $\overline{W_2}$ cut by $P \cap W$.
Then $\partial\widetilde{B_o}$ consists of three parts as above.

\begin{lemma}\label{lem4.3}
Suppose that a collection of essential disks $\mathcal{D}= \{ \Delta_i \}$ containing a non-separating disk $\Delta$
cuts a $3$-ball component $L$ from a genus two handlebody and $\partial L$ contains only one scar of $\Delta$.
Then one of the following holds. See Figure \ref{fig4}.

\begin{enumerate}
\item There is a disk $\Delta_1$ parallel to $\Delta$, and $\mathcal{D} = \{ \Delta, \Delta_1 \}$.
\item There are a disk $\Delta_1$ parallel to $\Delta$ and a separating disk $\Delta_2$, and
$\mathcal{D} = \{ \Delta, \Delta_1, \Delta_2 \}$.
\item There are a disk $\Delta_1$ parallel to $\Delta$ and two parallel disk $\Delta_2$ and $\Delta_3$, and
$\mathcal{D} = \{ \Delta, \Delta_1, \Delta_2, \Delta_3 \}$.
\item $\mathcal{D}$ is a collection of three mutually non-parallel disks $\{ \Delta, \Delta_2, \Delta_3 \}$.
\end{enumerate}
\end{lemma}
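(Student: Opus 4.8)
The plan is to turn the statement into a counting problem on the boundary sphere $\partial L$ and then read off the parallelism data from a complementary subsurface of $\partial H$. Write $H$ for the genus two handlebody and $\Sigma=\partial H$, and set $R=\partial L\cap\Sigma$, so that $\partial L$ is the union of $R$ with the scars of the disks of $\mathcal{D}$. Each scar is a disk glued to $R$ along one of its boundary circles, so if $s$ denotes the number of scars then $\chi(R)=2-s$. Since $R\subset\partial L=S^2$ it is planar, and this Euler characteristic forces $R$ to be connected, i.e.\ an $s$-holed sphere whose boundary circles are exactly the curves $\partial\Delta_i$ of the disks meeting $L$. As in the setting of the paper (where the $\Delta_i$ are the distinct subdisks cut from a reducing disk, each appearing once on the outermost ball) I may assume every disk of $\mathcal{D}$ contributes a single scar, so that $s=|\mathcal{D}|$; the whole argument is then organized by the value of $s$.

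The decisive constraint is the genus. Because each $\Delta_i$ is essential, every boundary circle of $R$ is essential in $\Sigma$, so the complement $\Sigma\setminus\mathrm{int}\,R$ has no disk component. Computing $\chi(\Sigma\setminus\mathrm{int}\,R)=\chi(\Sigma)-\chi(R)=s-4$ and using that a surface with nonempty boundary and no disk component has nonpositive Euler characteristic, I obtain $s\le 4$; and $s\ge 2$, since $s=1$ would make $\partial\Delta$ bound the disk $R$ in $\Sigma$, contradicting essentiality of $\Delta$. Hence $s\in\{2,3,4\}$.

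It then remains to enumerate the three values and translate the topology of the complement into disk data, via two standard facts: two essential disks whose boundary curves cobound an annulus in $\Sigma$ are parallel in $H$, and a disk whose boundary bounds a once-punctured torus in $\Sigma$ is separating. For $s=2$ the two boundary circles cobound the annulus $R$, so $\Delta$ is parallel to the other disk, giving case (1). For $s=4$ the complement has $\chi=0$ with four essential boundary circles, hence is a union of two annuli, pairing the four curves into two parallel pairs $\Delta\parallel\Delta_1$ and $\Delta_2\parallel\Delta_3$, which is case (3). The interesting value is $s=3$: the complement has $\chi=-1$ with three boundary circles, so it is either a pair of pants or the disjoint union of a once-punctured torus and an annulus. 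In the first alternative the three curves are mutually non-parallel (case (4)); in the second the annulus yields a parallel pair $\Delta\parallel\Delta_1$ while the once-punctured torus exhibits the remaining disk $\Delta_2$ as separating (case (2)). Here $\Delta$ cannot play the role of the separating curve, since it is non-separating, and this is exactly what fixes the roles of the disks.

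I expect the main difficulty to be the $s=3$ split, together with the bookkeeping needed to be certain the enumeration of complementary subsurfaces is exhaustive—that is, ruling out every combinatorial type with the correct Euler characteristic but an inadmissible (disk) component—and the care required by the degenerate possibility that a single disk meets $\partial L$ in two scars. To keep the last point under control I would first cut $H$ along the non-separating disk $\Delta$: the result is a solid torus carrying the two scars of $\Delta$, and \emph{every} essential disk of a solid torus is a meridian with all meridians isotopic. This rigid picture both explains why so few configurations can occur and lets me re-derive each of the four cases directly, providing an independent check on the Euler characteristic enumeration above.
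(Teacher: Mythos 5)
Your Euler--characteristic argument is a genuinely different route from the paper's. The paper cuts the handlebody $H$ along the non-separating disk $\Delta$ to obtain a solid torus $T$ and sorts the remaining disks of $\mathcal{D}$ by whether they are essential (meridians) or inessential in $T$, reading the four configurations off that dichotomy; you instead compute $\chi(R)=2-s$ for $R=\partial L\cap\Sigma$, bound $s$ using the fact that $\Sigma\setminus\mathrm{int}\,R$ has essential boundary and hence no disk components, and enumerate the complementary subsurfaces. Where it applies, your count is more systematic than the paper's proof, which simply asserts that the subcase ``$L$ lies in the complement of the ball cut off by $\Delta_1$'' yields Case (2) or (3). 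Two details you should still write out: in the $s=3$ pair-of-pants alternative you need a sentence on why no two of the three disks can be parallel in $H$ (a parallelism in $H$ gives an annulus in $\Sigma$ between the two boundary curves, which would have to be a complementary component of $R$ or contain the third curve, contradicting the pair-of-pants structure), and in $s=4$ you should note that each curve occurs exactly once on $\partial R$, so the two complementary annuli determine an honest pairing.

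The real issue is the assumption you flag but do not discharge: that every disk of $\mathcal{D}$ leaves exactly one scar on $\partial L$. This is not a harmless normalization; without it the lemma as literally stated fails, so no proof can sidestep it. Concretely, write $H=B^3\cup h_a\cup h_b$, let $\Delta$ and $\Delta_1$ be parallel cocores of $h_a$ and $\Delta_2$ a cocore of $h_b$, and let $L$ be the complement of the product region between $\Delta$ and $\Delta_1$: then $L$ is a $3$-ball carrying one scar of $\Delta$, one of $\Delta_1$ and two of $\Delta_2$; every hypothesis holds, and none of (1)--(4) does, since $\Delta_2$ is non-separating and $\mathcal{D}$ has only three members, two of which are parallel. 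Your proposed remedy --- cut along $\Delta$ first and invoke the rigidity of the solid torus --- is precisely the paper's argument, and carrying it out produces this configuration rather than excluding it: it arises in the paper's subcase where $L$ lies outside the ball cut off by $\Delta_1$, with $\Delta_2$ a single meridian of $T$ whose two scars both land on $\partial L$. So either the hypothesis must be strengthened to say that $L$ lies on one side of each disk of $\mathcal{D}$ (which is what the application in Proposition \ref{prop4.5} appears to supply and what Figure \ref{fig4} depicts), or a fifth case must be added; your write-up, like the paper's, currently leaves this unresolved.
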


\begin{figure}[ht!]
\begin{center}
\includegraphics[width=9cm]{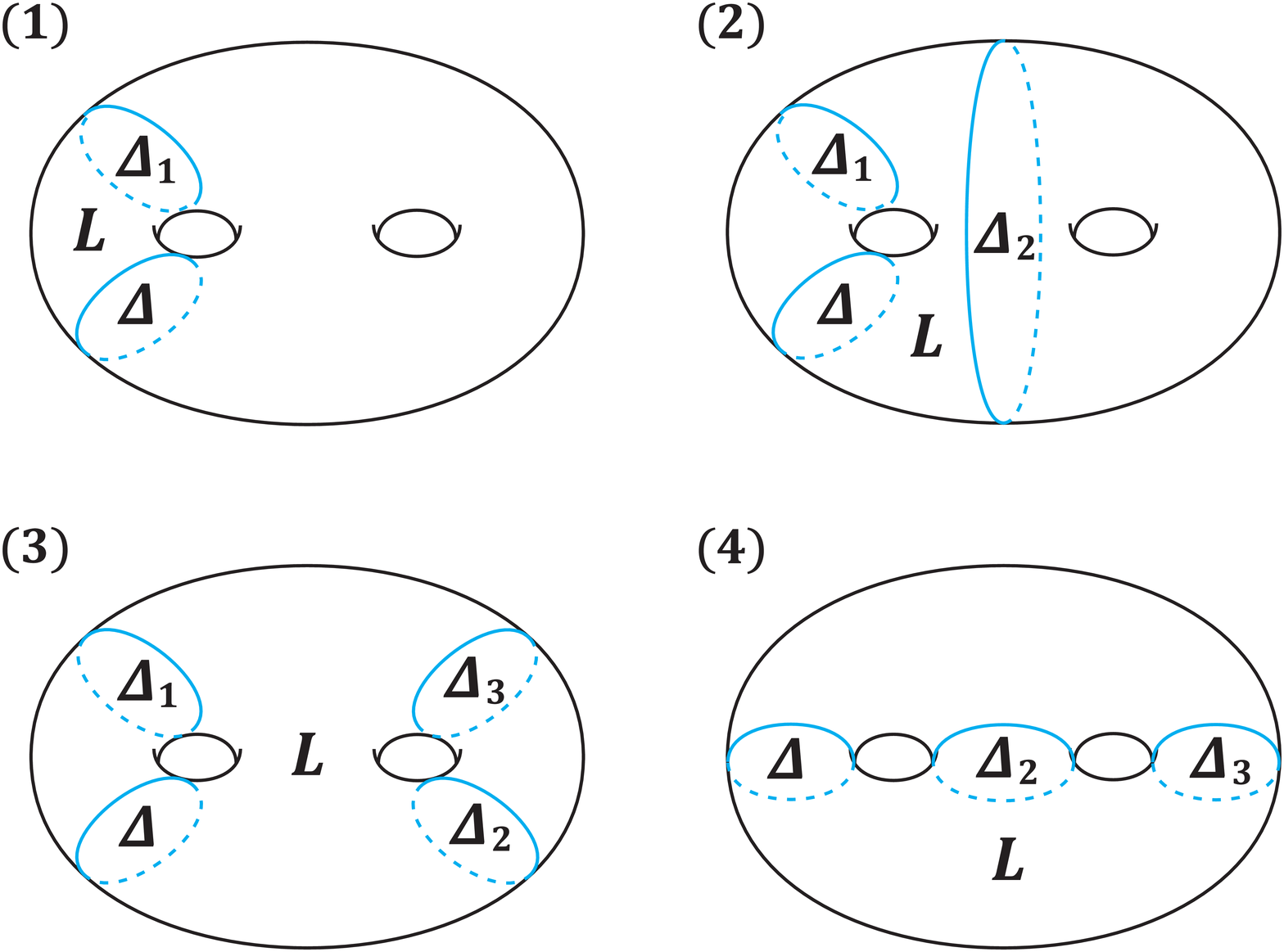}
\caption{}\label{fig4}
\end{center}
\end{figure}

\begin{proof}
The non-separating disk $\Delta$ cuts the genus two handlebody into a solid torus $T$ and
there are two scars of $\Delta$ on $\partial T$.
Suppose first that there is another disk of $\mathcal{D}$ which is inessential in $T$.
Because $\partial L$ contains only one scar of $\Delta$, there is a disk $\Delta_1$ that separates the two scars of $\Delta$.
It means that the disk in $\partial T$ bounded by $\partial\Delta_1$ contains only one scar of $\Delta$.
If $L$ is the $3$-ball that $\Delta_1$ cuts from $T$, then it is Case $(1)$.
If $L$ is in the complement of the $3$-ball that $\Delta_1$ cuts from $T$, then it is Case $(2)$ or $(3)$.

If every disk of $\mathcal{D}$ except $\Delta$ is essential in $T$, then it is Case $(4)$.
\end{proof}


\begin{definition}\label{def4.4}
A pair $(\Delta, \widetilde{\Delta})$ of an outermost disk of $D$ and $E$ respectively {\em cross} if
endpoints of $\partial\Delta \cap S$ and $\partial\widetilde{\Delta} \cap S$ appear alternately on $P \cap S$.
\end{definition}

\begin{proposition}\label{prop4.5}
For $P$ and $Q$, one of the following holds.

\begin{itemize}
\item There exist two outermost disks of $D$ (or $E$) which are parallel in $V_1$ (or $W_1$, respectively).
\item There exist a separating outermost disk of $D$ in $V_1$ and a separating outermost disk of $E$ in $W_1$,
      hence the subarcs of their boundaries in $S$ are parallel.
\item There exists a pair $(\Delta, \widetilde{\Delta})$ of an outermost disk of $D$ and $E$ respectively that cross.
\end{itemize}
\end{proposition}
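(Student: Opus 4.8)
The plan is to translate the whole statement into the combinatorics of arcs on the once-punctured genus two subsurface of $S$ determined by $P$. Write $p=P\cap S$ and $q=Q\cap S$ for the two separating loops, in minimal position, and let $\Sigma$ be the genus two component of $S-p$, so $\partial\Sigma=p$. I would first check that the boundary arc on $S$ of any outermost disk of $D$ is a whole component of $q\cap\Sigma$: its endpoints lie in $p\cap q$, its interior meets no point of $p\cap q$ (otherwise a chord of $D\cap(P\cap V)$ would cut into the bigon), and by Lemma \ref{lem4.2} the bigon, hence its arc, lies on the genus two side $\Sigma$. The same holds for $E$ by the parallel discussion for $W$. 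Moreover no component of $q\cap\Sigma$ can be simultaneously an outermost arc of $D$ and of $E$: the two bigons would glue along that arc into a disk of $Q$ whose boundary is a loop of $P\cap Q$ lying on $P$, and pushing $Q$ across it would lower $|P\cap Q|$. Thus the outermost arcs of $D$ and of $E$ are essential, pairwise disjoint, and drawn from distinct components of $q\cap\Sigma$, and Definition \ref{def4.4} records exactly when a $D$-arc and an $E$-arc are linked along the circle $p$.

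Next I would extract structure from Lemma \ref{lem4.3}. Capping each outermost bigon of $D$ with the subdisk that its chord cuts off of $P\cap V$ produces a properly embedded essential disk in the genus two handlebody $V_1$, and the discussion preceding Lemma \ref{lem4.3} shows that these capped disks form a system cutting the outermost $3$-ball $B_o$ off $V_1$, with a non-separating member meeting $\partial B_o$ in a single scar. Lemma \ref{lem4.3} then applies. Alternatives $(1)$, $(2)$ and $(3)$ each contain a parallel pair $\Delta,\Delta_1$, so in any of them two outermost disks of $D$ are parallel in $V_1$ and the first conclusion holds; running the same argument in $W_1$ via $\widetilde{B_o}$ handles $E$. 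Hence I may assume we sit in alternative $(4)$ on both sides, so that (taking those bounding $B_o$ and $\widetilde{B_o}$) there are three mutually non-parallel outermost disks of $D$ and three of $E$.

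It then remains to analyse the chord diagram that these $D$-arcs and $E$-arcs cut on $p$. Suppose the third conclusion also fails, so that no $D$-arc is linked with any $E$-arc. I would try to show that a family of three mutually non-parallel, pairwise disjoint, pairwise unlinked arcs of $\Sigma$, coexisting with the like family from $E$ subject to no cross-linking, is forced to contain a separating arc on each side, namely an arc splitting $\Sigma$ into two once-punctured tori. Finally, a separating $D$-arc and a separating $E$-arc are disjoint and unlinked, so each lies in a once-punctured torus piece cut off by the other; comparing the two partitions of $\Sigma$ should show that the only configuration compatible with minimality of $|P\cap Q|$ is that their subarcs on $S$ are parallel, which is the second conclusion.

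The main obstacle is this last paragraph, the arc combinatorics on the once-punctured genus two surface $\Sigma$. The two delicate points are (i) showing that banning all linkings between $D$-arcs and $E$-arcs and all same-side parallelisms genuinely forces a separating outermost disk on each side, rather than permitting a linking-free, parallelism-free family of non-separating arcs, and (ii) showing that two disjoint, unlinked separating arcs of the relevant type in $\Sigma$ must be parallel. I expect both to rest on minimality of $|P\cap Q|$ and on the rigidity that every arc in sight is a subarc of the single loop $q$, and I expect this bookkeeping, rather than the handlebody input from Lemmas \ref{lem4.2} and \ref{lem4.3}, to be where the genuine work lies.
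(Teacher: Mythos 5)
Your reduction via Lemma \ref{lem4.3} contains an error that removes most of the actual content of the proof. In alternatives $(1)$--$(3)$ of that lemma you conclude that two outermost disks of $D$ are parallel in $V_1$ because the collection contains a pair $\Delta,\Delta_1$ of parallel disks. But the members of the collection $\mathcal{D}$ in the application are the essential subdisks of $D$ lying on $\partial B$, and these are $2n$-gons; only $\Delta$ itself is known to be a bigon (outermost). The disk $\Delta_1$ parallel to $\Delta$ \emph{in the handlebody} $V_1$ need not be an outermost disk of $D$, so the first conclusion of the proposition does not follow from alternatives $(2)$ and $(3)$. The paper's proof spends most of its effort exactly here: in its Case $2$ it can only guarantee that some \emph{second} subdisk on $\partial B_o$ is a bigon, and when that second bigon is the separating disk $\Delta_2$ rather than $\Delta_1$ it must run a genuinely three-dimensional argument --- the loop $\alpha\cup\beta$ bounds a non-separating disk in $W_1$, and locating $\alpha$ relative to the decomposition of $V_1$ by $B$, the parallelism region $B'$, and the solid torus $T_2$ either contradicts minimality of $|P\cap Q|$ or produces a crossing pair. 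That is also where the second conclusion (separating outermost disks on both sides) actually arises.

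Consequently your plan for the residual case is misdirected as well. You propose to show that in alternative $(4)$ the absence of crossings forces a separating outermost disk on each side; but in alternative $(4)$ the disks $\Delta_2,\Delta_3$ are essential in the solid torus obtained by cutting along $\Delta$, hence are non-separating in $V_1$, so no separating outermost disk is available there. In the paper, alternative $(4)$ yields only the first or third conclusions, again via the disk $\alpha\cup\beta$ and the pair of pants $B'\cap\partial V_1$, not via chord-diagram combinatorics on $\Sigma$. The two combinatorial claims you defer to the end (forcing a separating arc, and two disjoint unlinked separating arcs being parallel) are therefore not the right statements to aim for, and in any case they are precisely the part you acknowledge you have not proved. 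The handlebody input from Lemmas \ref{lem4.2} and \ref{lem4.3} is not mere bookkeeping to be followed by surface combinatorics; the case analysis it generates, together with the ``bounds a disk on both sides'' contradictions, is the proof.
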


\begin{proof}\label{proof_prop}
Let $B_o$ be an outermost $3$-ball in $\overline{V_2}$ cut by $P \cap V$,
and let $\Delta_i, \Delta_j$($\subset \partial B_o$) be two disjoint bigon subdisks of $D$ cut by $P \cap V$.
Note that $B_o$ is contained in $V_1$.
If both $\Delta_i$ and $\Delta_j$ are separating in $V_1$,
then $\Delta_i$ and $\Delta_j$ are parallel, so we get the first conclusion.
So we may assume that at least one, say $\Delta = \Delta_i$, is non-separating in $V_1$.

Suppose that there is any subdisk component $D_i$ of $D$ in $\partial B_o$ which is inessential in $V_1$.
Then $D_i$ cuts a $3$-ball $B_i$ from $V_1$.
We add such $B_i$'s to $B_o$ for all $i$, and let $B$ be the resulting $3$-ball.
We can see that $P_1 = \partial B \cap (P \cap V)$ is a single disk.
Since $\Delta$ is a subdisk of a separating reducing disk, $\partial B$ contains only one scar of $\Delta$.

Similarly, let $\widetilde{B_o}$ be an outermost $3$-ball in $\overline{W_2}$ cut by $P \cap W$
and $\widetilde{B}$ be obtained by adding $3$-balls to $\widetilde{B_o}$ so that
any subdisk component of $E$ in $\partial\widetilde{B}$ is essential in $W_1$.
Let $\widetilde{\Delta} \subset \partial\widetilde{B}$ be a bigon subdisk of $E$ that is non-separating in $W_1$.
By Lemma \ref{lem4.3}, $\Delta$ and $\widetilde{\Delta}$ belong to one of the four cases.

\vspace{0.3cm}

Case $1$.\, There is a disk $\Delta_1$ parallel to $\Delta$, and $\mathcal{D} = \{ \Delta, \Delta_1 \}$.

Both $\Delta$ and $\Delta_1$ are bigons, and we get the first conclusion.

\vspace{0.3cm}

Case $2$.\, There are a disk $\Delta_1$ parallel to $\Delta$ and a separating disk $\Delta_2$, and
$\mathcal{D} = \{ \Delta, \Delta_1, \Delta_2 \}$.

The separating disk $\Delta_2$ cuts $V_1$ into two solid tori $T_1$ and $T_2$, and suppose that $\Delta$ is in $T_1$.
Let $B'$ be the $3$-ball of the parallelism between $\Delta$ and $\Delta_1$.
Since there are at least two bigons among the $\Delta_i$'s, $\Delta_1$ or $\Delta_2$ is a bigon.
If $\Delta_1$ is a bigon, then we get the first conclusion.
So we may assume that $\Delta_2$ is a bigon.

Consider the non-separating outermost disk $\widetilde{\Delta}$ of $E$ cut by $P \cap W$ mentioned above.
Let $\alpha = \partial\widetilde{\Delta} \cap S$ and let $a$ and $b$ be the two endpoints of $\alpha$.
Let $\beta$ be a subarc of $P \cap S$ with $\partial\beta = \{ a,b \}$, anyone among the two choices.
Then $\alpha \cup \beta$ bounds a non-separating disk in $W_1$.

\vspace{0.2cm}

Case $2.1$.\, $\alpha \subset B \cap S$

Since $a,b \in \partial P_1$ and $P_1 \subset P \cap V$,
the arc $\beta$ is isotopic in $P \cap V$ to a subarc of $\partial P_1$.
Hence $\alpha \cup \beta$ is isotopic to a loop in $\partial B$.
So $\alpha \cup \beta$ bounds a disk also in $V_1$, a contradiction.

\vspace{0.2cm}

Case $2.2$.\, $\alpha \subset B' \cap S$

Let $A$ be the annulus $B' \cap \partial T_1$.
If $A \cap (P \cap V)$ has a $2n$-gon component for some $n \ge 2$,
then $\partial B \cap (P \cap V)$($ = P_1$) would be disconnected, a contradiction.
Hence $A \cap (P \cap V)$ consists of bigon disks.
Then $A' = A \cap S$ is also an annulus.

\begin{claim}
If both endpoints of $\alpha$ are on the same component of $\partial A'$,
then $\alpha \cup \beta$ bounds a disk in $V_1$, a contradiction.
\end{claim}

\begin{proof}[Proof of Claim]
The arc $\alpha$ is isotopic in $B'$ to an arc $\gamma$ in $\Delta$ or $\Delta_1$, say $\Delta_1$.
The arc $\beta$ is isotopic in $P \cap V$ to a subarc $\delta$ of $\partial P_1$, with $\partial\delta = \partial\gamma$.
Since $\gamma \cup \delta$ is in $\partial B$, $\gamma \cup \delta$ bounds a disk in $V_1$.
Therefore $\alpha \cup \beta$ bounds a disk in $V_1$, a contradiction.
\end{proof}

If $a$ and $b$ are on different components of $\partial A'$, then $\Delta$ and $\widetilde{\Delta}$ cross.

\vspace{0.2cm}

Case $2.3$.\, $\alpha \subset T_2$.

Now we consider the four cases for $W_1$.
The disk $\widetilde{\Delta}$ corresponds to one of the cases in Case $1$ -- Case $4$ with the relevant notations.
In Case $1$, we get the first conclusion.
In Case $2$, we get the first conclusion, or there is a separating bigon disk in $W_1$ as we observed,
hence the second conclusion holds.
In Case $3$ and $4$, we get conclusions as will be explained below.

\vspace{0.3cm}

Case $3$.\, There are a disk $\Delta_1$ parallel to $\Delta$ and two parallel disk $\Delta_2$ and $\Delta_3$, and
$\mathcal{D} = \{ \Delta, \Delta_1, \Delta_2, \Delta_3 \}$

Remember that on $\partial B$ there are at least two bigon subdisks of $D$.
If $\Delta_1$ is a bigon disk, then $\Delta$ and $\Delta_1$ are parallel in $V_1$ and we get the first conclusion.
So without loss of generality, assume that $\Delta$ and $\Delta_2$ are bigon disks.
Consider the non-separating outermost disk $\widetilde{\Delta}$ of $E$ cut by $P \cap W$ and
the arcs $\alpha$ and $\beta$ as in Case $2$.
Then $\alpha \cup \beta$ bounds a non-separating disk in $W_1$.
Let $B'_1$ and $B'_2$ the two $3$-balls $V_1 - B$.

If $\alpha \subset B \cap S$, then $\alpha \cup \beta$ bounds a disk in $V_1$, a contradiction.
Suppose $\alpha \subset B'_1 \cap S$ or $\alpha \subset B'_2 \cap S$, say $\alpha \subset B'_1 \cap S$.
Let $A$ be the annulus $B'_1 \cap \partial V_1$.
Then $A' = A \cap S$ is also an annulus.
If both endpoints of $\alpha$ are on the same component of $\partial A'$,
then $\alpha \cup \beta$ bounds a disk in $V_1$, a contradiction.
If two endpoints of $\alpha$ are on different components of $\partial A'$,
then $\Delta$ and $\widetilde{\Delta}$ cross.

\vspace{0.3cm}

Case $4$.\, $\mathcal{D}$ is a collection of three mutually non-parallel disks $\{ \Delta, \Delta_2, \Delta_3 \}$.

Suppose that $\Delta$ and $\Delta_2$ are bigon disks without loss of generality.
Let $\widetilde{\Delta}$, $\alpha$ and $\beta$ be as above.
Then $\alpha \cup \beta$ bounds a non-separating disk in $W_1$.
Let $B'$ be the $3$-ball $V_1 - B$.

If $\alpha \subset B \cap S$, then $\alpha \cup \beta$ bounds a disk in $V_1$, a contradiction.
Suppose $\alpha \subset B' \cap S$.
Let $R$ be the pair of pants $B' \cap \partial V_1$.
Then $R' = R \cap S$ is also a pair of pants.
If both endpoints of $\alpha$ are on the same component of $\partial R'$,
then $\alpha \cup \beta$ bounds a disk in $V_1$, a contradiction.
If two endpoints of $\alpha$ are on different components of $\partial R'$,
then $\Delta$ and $\widetilde{\Delta}$ cross.
\end{proof}

If $\Delta$ and $\widetilde{\Delta}$ cross, then the loop obtained from $P \cap S$
by banding along the arcs $\partial\Delta \cap S$ and $\partial\widetilde{\Delta} \cap S$ bounds disks
in both $V_1$ and $W_1$.
So it results a reducing sphere $R$ with $|P \cap R| = 4$ and $|R \cap Q| < |P \cap Q|$.
However, $R$ does not divide $V$ into $\mathrm{(torus)} \times I$ and a genus two handlebody.
(It divides $V$ into a solid torus and a compression body.)
An investigation of intersection of reducing spheres such as $R \cap Q$
is left as further study.



\begin{thebibliography}{0}

\bibitem{Bachman} {\bf D. Bachman},
{\it Topological index theory for surfaces in $3$-manifolds},
Geom. Topol. {\bf 14} (2010), 585--609.

\bibitem{Bachman-Johnson} {\bf D. Bachman and J. Johnson},
{\it On the existence of high index topologically minimal surfaces},
Math. Res. Lett. {\bf 17} (2010), 389--394.

\bibitem{Jaco} {\bf W. Jaco},
{\it Adding a $2$-handle to a $3$-manifold: an application to property $R$},
Proc. Amer. Math. Soc. {\bf 92} (1984), 288--292.

\bibitem{Lee} {\bf J. H. Lee},
{\it On topologically minimal surfaces of high genus},
arXiv:1301.4752.

\bibitem{Scharlemann} {\bf M. Scharlemann},
{\it Automorphisms of the $3$-sphere that preserve a genus two Heegaard splitting},
Bol. Soc. Mat. Mexicana {\bf 10} (2004), 503--514.

\bibitem{Wu} {\bf Y. Q. Wu},
{\it A generalization of the handle addition theorem},
Proc. Amer. Math. Soc. {\bf 114} (1992), 237--242.

\end{thebibliography}
\end{document}